\documentclass[oneside]{amsart}
\usepackage{amsmath}
\usepackage{amsfonts}
\usepackage{amssymb}
\usepackage{latexsym}
\usepackage{amsthm}
\usepackage{diagrams}
\usepackage{setspace}
\usepackage{hyperref}
\usepackage{geometry}
\usepackage{xypic}

\everymath{\displaystyle}
\usepackage{mymacros}
\usepackage{url}

\newcommand{\Sh}{\operatorname{\textbf{Sh}}}
\newcommand{\Supp}{\operatorname{Supp}}

\begin{document}
 \title{A level raising result for modular Galois representations modulo prime powers.}

\author{Panagiotis Tsaknias}
\address{Department of Mathematics, University of Luxembourg,
                          Campus Kirchberg, 6 rue Richard Coudenhove-Kalergi, L-1359 Luxembourg}
                          \email{panagiotis.tsaknias@uni.lu}
		        \email{p.tsaknias@gmail.com}

\date{March 2012}
\subjclass[2010]{11F33, 11F80}

\begin{abstract}
In this work we provide a level raising theorem for $\mod\la^n$ modular Galois representations. It allows one to see such a Galois representation that is modular of level $N$, weight $2$ and trivial Nebentypus as one that is modular of level $Np$, for a prime $p$ coprime to $N$, when a certain local condition at $p$ is satisfied. It is a generalization of a result of Ribet concerning $\mod \ell$ Galois representations.
\end{abstract}

\maketitle

\section{Introduction}

Let $N$ and $k$ be positive integers, $S_k(\Gamma_0(N))$ be the space of modular forms of level $N$ and weight $k$, and $\T_k(N)$ be the $\Z$-algebra of Hecke operators acting faithfully on this space. Let also $R$ be a complete Noetherian local ring with maximal ideal $\mg_R$ and residue field of characteristic $\ell>0$. A (weak) eigenform of level $N$ and $k$ with with coefficients in $R$ is then defined to be a ring homomorphism $\theta:\T_k(N)\lra R$ (One can find a discussion on the various notions of modularity modulo prime powers as well as a comparison between them in \cite{ChenKimingWiese11}). We will denote by $\bar{\theta}$ its composition with $R\lra R/\mg_R$, i.e. the residual reduction of $\theta$.
Then one has the following theorem of Carayol (Theorem 3 in \cite{Carayol94}):
\begin{thm}[Carayol]
Let $k\geq 2$ and $N>4$ or assume that $6$ is invertible in $R$ (i.e. that $\ell\geq5$). If the representation attached to $\bar{\theta}$ is absolutely irreducible, then one can attach a Galois representation $\rho:\gq\lra\GL_2(R)$ to $\theta$ in the following sense: For every prime $q\nmid N\ell$, $\rho$ is unramified at $q$ and
$$\tr(\rho(\Frob_q)) = \theta(T_q).$$
\end{thm}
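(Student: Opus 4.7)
The plan is to construct $\rho$ in two stages: first build a $2$-dimensional pseudo-representation of $\gq$ with values in $R$ whose trace at $\Frob_q$ equals $\theta(T_q)$ for every prime $q\nmid N\ell$, then invoke the absolute irreducibility of the representation attached to $\bar{\theta}$ to promote it to a genuine representation $\rho:\gq\lra\GL_2(R)$.

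For the first stage I would work on the Hecke algebra side. Set $\hat{\T}:=\T_k(N)\otimes_{\Z}\Z_\ell$, a finite product of complete local $\Z_\ell$-algebras indexed by the maximal ideals of $\T_k(N)$ above $\ell$. After base change to $\bar{\Q}_\ell$ the ring $\hat{\T}\otimes_{\Z_\ell}\bar{\Q}_\ell$ splits as a finite product of copies of $\bar{\Q}_\ell$, indexed by the normalized eigenforms $f$ contributing to $S_k(\Gamma_0(N))$; to each such $f$ the Eichler-Shimura-Deligne construction (which requires $k\geq 2$) attaches a continuous Galois representation $\rho_f:\gq\lra\GL_2(\bar{\Q}_\ell)$, unramified outside $N\ell$, with $\tr(\rho_f(\Frob_q))=a_q(f)$ and $\det(\rho_f(\Frob_q))=q^{k-1}$. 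Assembling the $\rho_f$ componentwise yields functions $\tau,\delta:\gq\lra\prod_f\bar{\Q}_\ell$ whose values at $\Frob_q$ are exactly $T_q$ and $q^{k-1}$, both in the subring $\hat{\T}$. Since $\hat{\T}$ is closed in $\prod_f\bar{\Q}_\ell$ and Frobenius elements are dense in $\gq$ by Chebotarev, $\tau$ and $\delta$ take values in $\hat{\T}$ throughout, producing a $2$-dimensional pseudo-representation of $\gq$ with values in $\hat{\T}$.

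Because $R$ has residue characteristic $\ell$ and is complete Noetherian local, it is canonically a $\Z_\ell$-algebra and $\theta$ extends continuously to $\hat{\theta}:\hat{\T}\lra R$. Composition yields a $2$-dimensional pseudo-representation of $\gq$ with values in $R$ whose trace at $\Frob_q$ is $\theta(T_q)$, and whose residual semisimplification is the representation attached to $\bar{\theta}$. Since that residual representation is absolutely irreducible by hypothesis, a standard lifting theorem for pseudo-representations (in Carayol's or Nyssen-Rouquier's form) produces a representation $\rho:\gq\lra\GL_2(R)$ realizing this pseudo-representation; the Frobenius trace formula and unramifiedness outside $N\ell$ follow at once.

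The main obstacle is the descent step: showing that $\tau$ and $\delta$, which a priori take values only in $\prod_f\bar{\Q}_\ell$, actually take values in $\hat{\T}$. This is where the arithmetic input (the fact that the Hecke operators $T_q$ live in the integral Hecke algebra) meets the Galois input (Chebotarev density) to force integrality of the pseudo-representation. The subsequent lifting step is comparatively formal once absolute irreducibility of $\bar{\theta}$'s representation is known. The hypotheses $k\geq 2$ and ($N>4$ or $6\in R^\times$) enter precisely to guarantee that the Eichler-Shimura-Deligne construction applies and that no small-prime pathologies in the Hecke algebra obstruct the argument.
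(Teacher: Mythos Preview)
The paper does not prove this theorem at all: it is quoted as a result of Carayol, with the explicit citation ``Theorem 3 in \cite{Carayol94}'', and no argument is given. So there is nothing in the paper to compare your proposal against.

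That said, your outline is essentially the strategy Carayol himself uses: glue the Eichler--Shimura--Deligne representations attached to the characteristic-zero eigenforms into a pseudo-representation valued in the completed Hecke algebra, descend using Chebotarev and the integrality of the $T_q$, push forward along $\theta$ to $R$, and then lift to an honest representation using the absolute irreducibility of the residual representation. A couple of points you gloss over would need to be made precise in a full proof: the closedness of $\hat{\T}$ inside $\prod_f\bar{\Q}_\ell$ (this follows from $\hat{\T}$ being finite over $\Z_\ell$, hence compact), and the verification that $(\tau,\delta)$ really satisfies the pseudo-representation identities and is continuous, not merely that it has the right values on Frobenii. Your explanation of the hypothesis ``$N>4$ or $6\in R^\times$'' is vague; in Carayol's paper this condition is tied to the applicability of his lifting theorem for pseudo-characters (where invertibility of small primes in the coefficient ring matters), rather than to the Eichler--Shimura--Deligne construction itself.
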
 
A representation that arises in the way described by the previous theorem is called \emph{modular}. If one wants to explicitly mention a specific eigenform $\theta$ due to which the representation $\rho$ is modular one can say that $\rho$ is \emph{attached to} or \emph{associated with} $\theta$. 

One can then ask if the converse is true: Given a Galois representation $\rho:\gq\lra\GL_2(R)$, when is it modular? Furthermore can one have a hold on what the level and weight of this eigenform will be?

Let $p$ be a rational prime. Then one has a natural inclusion map
$$S_k(\Gamma_0(N))\oplus S_k(\Gamma_0(N)) \lra S_k(\Gamma_0(Np))$$
whose image is called the $p$-old subspace. This subspace is stable under the action of $\T_k(Np)$ and so is its orthogonal complement through the so-called Peterson product. We call this complementary subspace the $p$-new subspace and we denote by $\T_k^{p-\textrm{new}}(Np)$ the quotient of $\T_k(Np)$ that acts faithfully on it. We will call this quotient the $p$-new quotient of $\T_k(Np)$. There is also the $p$-old quotient that is defined in the obvious way.
Fix another rational prime $\ell$. Assume $\Oc$ is the ring of integers of a number field and $\la$ a prime above $\ell$. Here we prove the following level raising result:
\begin{thm}\label{thm:main}
Let $n\geq 2$ be an integer and $\rho : \gq \lra \GL_2(\qtn)$ be a continuous Galois representation that is modular, associated with a Hecke map $\theta:\T_2(N)\lra\qtn$, and residually absolutely irreducible. Let also $p$ be a prime such that $(\ell N,p)=1$ and  assume that $\tr(\rho(\Frob_p)) \equiv \pm (p+1) \mod \la^n$.Then $\rho$ is also associated with a Hecke map $\theta':\T_2(Np)\lra\qtn$ which is new at $p$, i.e. $\theta'$ factors through the $\T_2^{p-\textrm{new}}(Np)$.
\end{thm}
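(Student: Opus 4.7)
The plan is to combine Ribet's classical mod $\la$ level raising theorem with a Hecke-module lifting argument controlled by an integral multiplicity-one statement for the character group of the toric part of $J_0(Np)$ at $p$.

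First, set $\T^{\textrm{new}} := \T_2^{p-\textrm{new}}(Np)\otimes_\Z\Oc$. Reducing the hypothesis modulo $\la$ gives $\tr(\bar\rho(\Frob_p))\equiv\pm(p+1)\pmod\la$, and by assumption $\bar\rho$ is absolutely irreducible; Ribet's classical level raising theorem then produces a maximal ideal $\mg\subset\T^{\textrm{new}}$ with residue field $\Oc/\la$ realizing $\bar\theta$. The proof thus reduces to producing a ring homomorphism $\theta':\T^{\textrm{new}}_\mg\lra\qtn$ that recovers the values $\theta(T_q)$ for all $q\nmid Np\ell$.

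Next, I would invoke the geometric picture underlying Ribet's original argument. Let $\mathcal{X}$ denote the character group of the toric part of $J_0(Np)$ at $p$, canonically identified with the group of degree-zero divisors on the supersingular locus of $X_0(N)_{\overline{\mathbb{F}}_p}$, equipped with its faithful Hecke action of $\T^{\textrm{new}}$. Under residual absolute irreducibility of $\bar\rho$, the localization $(\mathcal{X}\otimes_\Z\Oc)_\mg$ is free of rank $1$ over $\T^{\textrm{new}}_\mg$ --- the integral multiplicity-one theorem of Mazur and Ribet. Consequently, producing $\theta'$ is equivalent to exhibiting a nonzero eigenvector in $(\mathcal{X}\otimes_\Z\qtn)_\mg$ whose Hecke eigenvalues match $\theta$ on primes away from $Np$.

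The core computation is to construct this eigenvector. The Hecke action of $T_q$ (for $q\nmid Np$) on $\mathcal{X}$ is described via Brandt matrices on supersingular points, while the Eichler-Shimura congruence relation at $p$ on $X_0(N)$ identifies the induced action of $T_p$ with the sum of Frobenius and Verschiebung; the hypothesis $\theta(T_p)\equiv\pm(p+1)\pmod{\la^n}$ then forces, after localizing at $\mg$, the existence of a vector in $\mathcal{X}\otimes_\Z\qtn$ that is a $\theta$-eigenvector for the $T_q$ ($q\nmid Np$) and on which $U_p$ acts as $\pm 1$. Combined with the rank-one freeness, this yields $\theta'$. The main technical obstacle is the integral multiplicity-one statement: Ribet's original argument only uses the mod $\la$ version, whereas the mod $\la^n$ conclusion demands $\Oc$-freeness of $(\mathcal{X}\otimes_\Z\Oc)_\mg$ in order to convert the explicit eigenvector back into a character of $\T^{\textrm{new}}_\mg$; verifying this in our setting, via absolute irreducibility of $\bar\rho$ together with the hypotheses of Carayol's theorem, is the key technical input.
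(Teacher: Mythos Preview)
Your approach via the character group $\mathcal{X}$ of the toric part of $J_0(Np)$ at $p$ is genuinely different from the paper's. The paper works entirely inside Jacobians, directly generalizing Ribet's argument. It sets $I=\ker\theta$ and $\mg=\ker\bar\theta$, considers $V_I=J_0(N)[I]$, and embeds it (anti-)diagonally into $J_0(N)\times J_0(N)$ and then via $\alpha$ into $A\subset J_0(Np)$. A direct computation using the level-raising hypothesis shows that $\T_{Np}$ acts on the image with $T_q$ acting as $\theta(T_q)$ and $U_p$ as $\mp 1$, which gives $\theta'$. Newness at $p$ amounts to showing the image lies in $A\cap B\cong\Sigma^\perp/\Sigma$; both this and the earlier injectivity of $V_I\to A$ are handled by the same support argument: a short lemma shows $\mg$ is the unique maximal ideal of $\T_N$ over $I$ (because $\T_N/I$ embeds in the local ring $\qtn$), $\mg$ is non-Eisenstein since $\bar\rho$ is irreducible, while the obstruction groups $\Sigma$ and $\Delta/\Sigma^\perp$ have only Eisenstein support. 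No multiplicity-one or Gorenstein input is invoked.

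In your outline the real content is the third paragraph, where the hypothesis $\theta(T_p)\equiv\pm(p+1)$ is supposed to produce an eigenvector in $\mathcal{X}\otimes\qtn$; you only gesture at Eichler--Shimura and give no construction. Making this precise would require embedding an analogue of $V_I$ into $\mathcal{X}$ via Ribet's exact sequence for the character group and running the same Eisenstein-support argument the paper carries out---so you have not bypassed the main step. Moreover, the integral multiplicity-one statement you flag as ``the key technical input'' is not doing the work you claim: knowing $(\mathcal{X}\otimes\Oc)_\mg\cong\T^{\textrm{new}}_\mg$ only says that eigenvectors in $\mathcal{X}\otimes\qtn$ correspond to characters $\T^{\textrm{new}}_\mg\to\qtn$, which is precisely what you are trying to construct; it produces neither on its own. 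It is also a nontrivial Gorenstein-type statement, and the theorem explicitly allows $\ell\mid N$, a regime where such freeness results are delicate. The paper's Jacobian route avoids this issue entirely by working with the $\T_N/I$-module $V_I$ rather than a single eigenvector.
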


\remark For $n=1$ this is Theorem 1 of \cite{Ribet90a}. 

\remark The theorem does not exclude the case $\ell|N$. 

\remark As with the case $n=1$, one can also prove the theorem in the case $p=\ell$ by assuming the condition $\theta(T_p)\equiv \pm(p+1)\mod\la^n$ instead of the one involving the trace of the representation.

\remark Notice that even if the Hecke map that makes $\rho$ modular in the first place lifts to characteristic 0, i.e. comes from a classical eigenform, there is no guarantee that the Hecke map of level new at $p$ that one obtains in the end lifts too.

\begin{cor}
Let $\rho$ be as in Theorem \ref{thm:main}. Then there exist infinitely many primes $p$ (coprime to $N$) such that $\rho$ is modular of level $Np$, new at $p$.
\end{cor}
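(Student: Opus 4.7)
The plan is to derive the corollary from Theorem \ref{thm:main} by a Chebotarev density argument. By that theorem it is enough to exhibit infinitely many primes $p$ coprime to $\ell N$ such that $\tr(\rho(\Frob_p)) \equiv p+1 \bmod \la^n$; producing the $+$ case alone will suffice.

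I would first reduce $\rho$ modulo $\la^n$ to obtain a representation $\bar\rho_n : \gq \to \GL_2(\Oc/\la^n)$ with finite image, let $K$ denote the fixed field of $\ker(\bar\rho_n)$ inside $\bar\Q$, and form the compositum $K' := K \cdot \Q(\zeta_{\ell^n})$. This is a finite Galois extension of $\Q$ whose Frobenius at a prime $p$ unramified in $K'$ simultaneously records $\rho(\Frob_p) \bmod \la^n$ and the residue of $p$ modulo $\ell^n$ via the cyclotomic character. By the Chebotarev density theorem, infinitely many rational primes $p$ split completely in $K'$; after removing the finitely many that divide $\ell N$, infinitely many such $p$ remain.

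For any of these $p$, $\Frob_p$ is trivial in $\Gal(K'/\Q)$, so $\rho(\Frob_p) \equiv I \bmod \la^n$ and hence $\tr(\rho(\Frob_p)) \equiv 2 \bmod \la^n$; simultaneously $p \equiv 1 \bmod \ell^n$, and the inclusion $\ell^n \Oc \subseteq \la^n$ (which holds because $\la \mid \ell$) upgrades this to $p + 1 \equiv 2 \bmod \la^n$. The hypothesis of Theorem \ref{thm:main} is therefore satisfied for each such $p$ and yields a Hecke map $\theta'$ factoring through $\T_2^{p-\textrm{new}}(Np)$, as required. No step here constitutes a serious obstacle; the only point needing care is the bookkeeping between the $\ell^n$-adic congruence produced by the cyclotomic factor of $K'$ and the $\la^n$-adic condition imposed by Theorem \ref{thm:main}, and essentially all of the substance of the corollary sits inside that theorem.
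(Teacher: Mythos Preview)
Your argument is correct and in substance matches the paper's: the paper simply invokes Lemma~7.1 of \cite{Ribet90b}, a Chebotarev-density statement producing infinitely many primes satisfying the required congruence, and you have spelled out (a special case of) that argument directly by taking primes split in the compositum of the splitting field of $\rho$ with $\Q(\zeta_{\ell^n})$. One cosmetic remark: since $\rho$ already takes values in $\Oc/\la^n$, your ``reduction modulo $\la^n$'' step is vacuous.
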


\begin{proof}
Immediate consequence of Lemma 7.1 in \cite{Ribet90b}.
\end{proof}

In what follows we set $\T_N := \T_2(N)$ and $\T_{Np} := \T_2(Np)$. We will also denote the $p$-th Hecke operator in $\T_{Np}$ by $U_p$ in order to emphasize the different way of acting compared to the one in $\T_N$.

\section{Jacobians of modular curves}
In this section we gather the necessary results from \cite{Ribet90a} that we will need in the proof of the main result.

Let $N$ be a positive integer. Let $X_0(N)_\C$ be the modular curve of level $N$ and $J_0(N):=\operatorname{Pic}^0(X_0(N))$ its Jacobian. There is a well defined action of  the Hecke operators $T_n$ on
$X_0(N)$ and hence, by functoriality, on $J_0(N)$ too. The dual of $J_0(N)$ carries an action of the Hecke algebra as well and can be identified with $S_2(\Gamma_0(N))$.
This implies that one has a faithful action of $\T_N$ on $J_0(N)$. 

Let now $p$  be a prime not dividing $N$. In the same way one has an action of Hecke operators on $X_0(Np)$  and its Jacobian $J_0(Np)$ and the latter admits a faithful action of $\T_{Np}$.
The interpretation of $X_0(N)$ and $X_0(Np)$ allows us to define the two natural degeneracy maps $\delta_1, \delta_p:X_0(Np)\lra X_0(N)$ and their pullbacks $\delta_1^*, \delta_p^*:J_0(N)\lra J_0(Np)$.

There is a map
\begin{equation}\label{seq1}
\alpha:J_0(N)\times J_0(N)\lra J_0(Np),\textrm{\ \ \ \ \ \ \ }(x,y)\mapsto \delta_1^*(x)+\delta_p^*(y).
\end{equation}
whose image is by definition the p-old subvariety of $J_0(Np)$. We will denote this by $A$. This map $\alpha$ is \emph{almost} Hecke-equivariant:
\begin{equation}
\alpha\circ T_q = T_q\circ\alpha\textrm{ for every prime }q\neq p , \label{eq:1}
\end{equation}
\begin{equation}
\alpha\circ\fm{T_p}{p}{-1}{0} = U_p\circ\alpha \label{eq:2}
\end{equation}
Of course, the first one makes sense only if one interprets the operator $T_q$ as
acting diagonally on $J_0(N)\times J_0(N)$.
Consider also the kernel $\Sh$ of the map $J_0(N) \lra J_1(N)$ induced by $X_1(N)\lra X_0(N)$. If we inject it into $J_0(N)\times J_0(N)$ via
$x \mapsto (x, -x)$ then its image, which we will denote by $\Sigma$, is the kernel of the previous map $\alpha$ (see Proposition 1 in \cite{Ribet90a}).
Furthermore $\Sh$, and therefore $\Sigma$ too, are annihilated by the operators $\eta_r = T_r - (r+1)\in \T_N$ for all primes $r\nmid Np$. (see Proposition 2 in \cite{Ribet90a}).

We make a small parenthesis here to introduce a useful notion.
\begin{df}
A maximal ideal $\mg$ of the Hecke algebra $\T_N$ is called Eisenstein if it contains the operator $T_r - (r+1)$ for almost all primes $r$.
\end{df}

We need a few more definitions and facts (see Corollary in \cite{Ribet90a} and the discussion after that):

Let $\Delta$ be the kernel of $\fm{1+p}{T_p}{T_p}{1+p}\in M^{2\times2}(\T_N)$ acting on $J_0(N)\times J_0(N)$. $\Delta$ is finite and comes equipped with a perfect $\mathbb{G}_m$-valued skew-symmetric pairing $\omega$. Furthermore $\Sigma$ is a subgroup of $\Delta$, self orthogonal, and $\Sigma \subset \Sigma^\perp \subset \Delta$. One can also see $\Delta/\Sigma$ and therefore its subroup $\Sigma^\perp/\Sigma$, as a subgroup of $A$.

Let B be the $p$-new subvariety of $J_0(Np)$. It is a complement of $A$, i.e. $A+B = J_0(Np)$  and $A\cap B$ is finite. The Hecke algebra acts on it faithfully through its $p$-new quotient and it turns out (see Theorem 2 in \cite{Ribet90a}) that
\begin{equation}\label{iso:1}
A\cap B \isom \Sigma^\perp/\Sigma.
\end{equation}
as groups, with the isomorphism given by the map $\al$.

\section{Proof of Theorem \ref{thm:main}}

Let $\theta:\T_N\lra \qtn$ be the eigenform associated with $\rho$, $\bar{\theta}:\T_N\lra\Oc/\la$ its reduction $\mod \la$ (which is associated with $\rb$, the $\mod \la$ reduction of $\rho$) and let $I$ and $\mg$ be the kernels of $\theta$ and $\bar{\theta}$ respectively.
It will be enough to find a weak modular form $\theta':\T_2(Np)\lra\qtn$ that agrees with $\theta$ on $T_q$ for all primes $q\neq p$ (i.e. they define the same Galois representation) and factors through $\T_2^{p-\textrm{new}}(Np)$(i.e. new at $p$). In what follows we will be writing $\Ann(M)$ instead of $\Ann_{\T_N}(M)$ to denote the annihilator of a $\T_N$-module $M$. 

Let us begin with the following auxiliary result:
\begin{lem}\label{lem:aux}
$\mg$ is the only maximal ideal of $\T_N$ containing $I$.
\end{lem}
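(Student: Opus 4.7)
The plan is to exploit the embedding $\theta : \T_N/I \hookrightarrow \Oc/\la^n$ together with the fact that $\Oc/\la^n$ is a finite local ring (finite because each successive quotient $\la^i/\la^{i+1}$ is a one-dimensional vector space over the finite field $\Oc/\la$; local because its unique maximal ideal is $\la\Oc/\la^n$).

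I would first observe that $\T_N$ is a finitely generated $\Z$-module, so the image $\theta(\T_N)\cong \T_N/I$ is a finite commutative ring. Being Artinian, it decomposes via the structure theorem as a product $R_1 \times \cdots \times R_k$ of finite local rings, and by the correspondence of ideals these factors are in bijection with the maximal ideals of $\T_N$ containing $I$. Thus it suffices to show $k = 1$.

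For this I would invoke the standard fact that a local ring contains no idempotents other than $0$ and $1$: if $e^2 = e$ with $e \neq 0, 1$, then $e(1-e)=0$ forces both $e$ and $1-e$ to be non-units, hence both to lie in the unique maximal ideal, contradicting $e + (1-e) = 1$. Applied to $\Oc/\la^n$, this forces its subring $\T_N/I$ to have only trivial idempotents as well; but a product decomposition with $k \geq 2$ would supply the nontrivial idempotent $(1,0,\ldots,0)$, so we must have $k = 1$ and $\T_N/I$ is itself local.

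Finally, since $\mg = \ker\bar\theta$ and $\bar\theta$ is the composition of $\theta$ with the reduction $\Oc/\la^n \to \Oc/\la$, we have $\mg \supseteq I$, and therefore $\mg/I$ must be the unique maximal ideal of $\T_N/I$. The lemma then follows from the correspondence of ideals through the quotient $\T_N \to \T_N/I$. I do not foresee any serious obstacle; the argument is essentially a formal consequence of the locality of $\Oc/\la^n$ and the finiteness of the Hecke algebra over $\Z$.
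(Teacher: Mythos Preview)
Your argument is correct and follows essentially the same route as the paper: both reduce to showing that $\T_N/I$ is local, use its Artinian (in fact finite) structure to decompose it as a product of local rings, and then argue via idempotents that the embedding into the local ring $\Oc/\la^n$ forces the product to have a single factor. Your write-up is slightly more explicit in spelling out why a local ring has no nontrivial idempotents and why $\mg \supseteq I$, but there is no substantive difference in approach.
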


\begin{proof}
We will equivalently show that $\T_N/I$ is local. The proof actually works for any Artinian ring injecting into a local ring.

By the definition of $I$, $\T_N/I$ injects in $\qtn$. Since $\T_N/I$ is Artinian it decomposes into the product of its localizations at its prime (actually maximal) ideals, which are finitely many, say $s\geq 1$. The set containing the identity $e_i$ of each component then forms a complete set (i.e. $\sum_{i=1}^se_i=1$) of pairwise orthogonal (i.e. $e_ie_j=0$ for $1\leq i\neq j\leq s$) non-trivial (i.e. $e_i\neq0,1$) idempotents for $\T_N/I$. The set $\{\bar{e}_1,\ldots, \bar{e}_s\}$ of their image through the injection of $\T_N/I$ into $\qtn$ is clearly a complete set of pairwise orthogonal non-trivial idempotents too. This implies that $\qtn$ is isomorphic to $\prod_{i=1}^s\bar{e}_i(\qtn)$. But this cannot happen unless $s=1$ since $\qtn$ is local. Since $s=1$ we get that $\T_N/I$ is local.
\end{proof}

We define:
$$V_I = J_0(N)[I],$$
$$V_{\mg} = J_0(N)[\mg]$$

We have that $\mg\subseteq\Ann(V_{\mg})$ by the definition of $V_{\mg}$. But $\mg$ is maximal so $\mg=\Ann(V_{\mg})$. We also have that $\Ann(V_I) \subseteq \Ann(V_{\mg}) = \mg$, so $\mg$ is in the support of $\Ann(V_I)$. Since the representation $\rb$, which is the reduction of $\rho$ and it is associated to $\bar{\theta}$, is irreducible we get that $\mg$ is not Eisenstein (See for example Theorem 5.2c in  \cite{Ribet90b}).
Since $I\subseteq\Ann(V_I)$, Lemma \ref{lem:aux} implies that $\Supp(V_I)$ is the singleton $\{\mg\}$.

As in \cite{Ribet90a} we will consider the case where $\tr(\rho(\Frob_p)) \equiv -(p+1) \mod \la^n$. The other case where $\tr(\rho(\Frob_p)) \equiv p+1 \mod \la^n$ is treated in exactly the same case, with some minor alterations which we explicitly mention. Since $\rho$ is modular, associated with $\theta$, this translates to
\begin{equation}\label{eq:3}
\theta(T_p) \equiv -(p+1) \mod \la^n.
\end{equation}
Now consider the composite map
$$J_0(N)\to J_0(N)\times J_0(N)\xrightarrow{\al} A\subseteq J_0(Np),$$
where the first map is the diagonal embedding (in the case of $\tr(\rho(\Frob_p)) \equiv p+1 \mod \la^n$ we pick the anti-diagonal map) and the second is the map $\alpha$ defined in the previous section. By abuse of notation, we will also denote by $V_I$ the image of $V_I$ in $J_0(N)\times J_0(N)$ via the diagonal embedding. We then claim that its intersection with $\Sigma$ is zero:
Assume that it is not, and denote it by $V'_I$. 
It is easy to see that $V'_I$ is preserved by the action of $\T_N$ so it can be seen as a $\T_N$-module: For an $(x,x)\in V'_I$ we have (using realtion \eqref{eq:1})
\begin{equation}\label{eq:4}
\al(T_q(x,x))= T_q(\al(x,x))=T_q(0)=0\qquad\textrm{for primes }q\neq p
\end{equation}
and (using relation \eqref{eq:3})
$$\al(T_p(x,x))=\al(T_p(x),T_p(x))=\al(-(p+1)x,-(p+1)x)=-(p+1)\al(x,x)=0.$$
In the case where $\theta(T_p)\equiv p+1 \mod\la^n$, the elements of $V'_I$ are of the form $(x,-x)$ but the reasoning is the same.
Since $\Sigma$ is annihilated by almost all operators $T_r - (r+1)$, $V'_I$ is annihilated by almost all of them too. This implies that every maximal ideal containing $\Ann(V'_I)$ is Eisenstein. But $\Ann(V_I)\subseteq \Ann(V'_I)$ so $V_I$ has an Eisenstein ideal in its support. On the other hand the only maximal ideal in the support of $V_I$ is $\mg$ which is non-Eisenstein, so we get a contradiction.
One can therefore see $V_I$ as a subgroup of $A$ and we will abuse notation to denote its image through the above map by $V_I$ too. We have the following Lemma:
\begin{lem}
$V_I$ is stable under the action of $\T_{Np}$ and the action is given by a ring homomorphism $\theta':\T_{Np}\lra \qtn$.
\end{lem}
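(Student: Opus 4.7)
The plan is to verify stability of $V_I$ inside $A$ under each generator of $\T_{Np}$ using the partial Hecke-equivariance relations \eqref{eq:1} and \eqref{eq:2}, and then to assemble the scalars by which these generators act on $V_I$ into the ring homomorphism $\theta'$.

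Stability under $T_q$ for $q\neq p$ is immediate from \eqref{eq:1}: for $\alpha(x,x)\in V_I$, one has $T_q\alpha(x,x)=\alpha(T_qx,T_qx)\in V_I$ since $V_I\subset J_0(N)$ is already $\T_N$-stable, and $T_q$ acts on $V_I$ as scalar multiplication by $\theta(T_q)\in\T_N/I\subset\qtn$.

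The key computation is for $U_p$. Relation \eqref{eq:2} gives
$$U_p\alpha(x,x)=\alpha(T_px+px,-x).$$
Since $\theta$ takes values in $\qtn=\Oc/\la^n$, the hypothesis $\theta(T_p)\equiv-(p+1)\mod\la^n$ from \eqref{eq:3} is literally an equality in $\qtn$; hence $T_p+(p+1)\in\ker\theta=I$ annihilates $V_I$, giving $T_px=-(p+1)x$ for $x\in V_I$, and therefore
$$U_p\alpha(x,x)=\alpha(-x,-x)=-\alpha(x,x).$$
So $V_I$ is $U_p$-stable and $U_p$ acts on it as $-1$; the parallel computation in the $\theta(T_p)\equiv p+1$ case, using the anti-diagonal embedding, yields $U_p$ acting as $+1$.

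Since $\T_{Np}$ is commutative and generated by the operators $T_q$ ($q\neq p$) together with $U_p$, and each of these acts on $V_I$ as scalar multiplication by an element of the subring $\T_N/I\subset\qtn$, every $T\in\T_{Np}$ acts on $V_I$ as scalar multiplication by some $\theta'(T)\in\T_N/I\subset\qtn$; the assignment $T\mapsto\theta'(T)$ then gives the desired ring homomorphism $\theta':\T_{Np}\to\qtn$, with $\theta'(T_q)=\theta(T_q)$ and $\theta'(U_p)=-1$ (respectively $+1$). The one delicate point is that this scalar be unambiguously defined, which amounts to $V_I$ being a faithful $\T_N/I$-module, i.e.\ to $\T_N/I\hookrightarrow\End(V_I)$ being injective. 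That faithfulness is the substantive input one needs to draw from the structure of the modular Jacobian, on top of the absolute irreducibility of $\bar\rho$ used earlier to rule out Eisenstein primes.
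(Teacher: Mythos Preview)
Your argument is essentially identical to the paper's: both apply \eqref{eq:1} and \eqref{eq:2} to compute the action of $T_q$ and $U_p$ on elements $\alpha(x,x)$, both invoke \eqref{eq:3} to reduce $T_px+px$ to $-x$, and both arrive at the formulas $\theta'(T_q)=\theta(T_q)$ and $\theta'(U_p)=-1$ (respectively $+1$).

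The faithfulness issue you single out at the end is genuine, and the paper glosses over it as well: to pass from ``each generator of $\T_{Np}$ acts on $V_I$ as multiplication by a specific element of $\T_N/I$'' to a well-defined ring homomorphism $\T_{Np}\to\T_N/I\hookrightarrow\qtn$, one needs the map $\T_N/I\to\End(V_I)$ to be injective; otherwise two polynomial expressions for the same $T\in\T_{Np}$ could yield different scalars in $\qtn$. One clean way around this is to note that restricting the $\T_{Np}$-action to the $p$-old subspace of $S_2(\Gamma_0(Np))\cong S_2(\Gamma_0(N))^2$ already gives a ring homomorphism $\T_{Np}\to M_2(\T_N)$, since $\T_N$ acts faithfully on $S_2(\Gamma_0(N))$; composing entrywise with $\theta$ lands in $M_2(\qtn)$, where $(1,1)^T$ is a common eigenvector for the image, and the eigenvalue map is the desired $\theta'$. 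Alternatively, the faithfulness of $V_I$ over $\T_N/I$ can be extracted from the freeness of the localized Tate module $T_\ell J_0(N)_\mg$ over $(\T_N)_\mg$, which follows from Mazur's Gorenstein results under the standing hypothesis that $\bar\rho$ is absolutely irreducible.
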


\begin{proof}
This is nothing but a straightforward calculation: 

First note that the action of $\T_N$ on $V_I$ factors through $T_N/I$ so we obtain a map $\theta(\T_N/I)\lra\End(V_I)$. Let $y$ be a non-trivial element of the image of $V_I$ in $A$. Then there exists $x\in V_I$ such that $\al(x,x)=y$. Let now $q$ be a prime other than $p$. In view of relation \eqref{eq:1} and we have that:
$$T_q(y) = T_q(\al(x,x)) = \al(T_q(x),T_q(x)) = \al(\theta(T_q)x,\theta(T_q)x) = \theta(T_q)\al(x,x) = \theta(T_q)y.$$
For $q=p$ we have (using relation \eqref{eq:2} and \eqref{eq:3}):
$$U_p(y) = U_p(\al(x,x)) = \al(\fm{T_p}{p}{-1}{0}(x,x)^{\textrm{T}}) = \al(T_p(x) + px, -x) =$$
$$= \al(\theta(T_p)x + px, -x) = \al(-x,-x) = -\al(x,x) = -y$$
It turns out that $y$ is an eigenvector and that the action of $\T_{Np}$ on it defines a ring homomorphism $\theta':\T_{Np}\lra\qtn$ via:
$$\theta'(T_q) = \theta(T_q)\qquad\textrm{for all primes }q\neq p\textrm{ and}$$
$$\theta'(U_p) = -1$$

To treat the other case one has to keep in mind for the formulas above that $y=\al(x,-x)$ and proceed in the same way to get the same result except that $U_p(y) = y$ this time and therefore $\theta'(U_p) = 1$.
\end{proof}

\remark Since the $\theta$ and $\theta'$ actually agree on almost all primes, it is clear that they are associated with the same Galois representation, so $\theta'$ is the candidate map we were looking for.

To finish of the proof of the main result it remains to show that the map factors through the $p$-new quotient of the Hecke algebra. To this end, it is enough to show that  $V_I$, when viewed as a subgroup of $J_0(Np)$ is a subgroup of $(A\cap B)$. We again proceed according to Ribet. It is easy to see that $V_I$, when considered as a subgroup of $J_0(N)\times J_0(N)$, is a subgroup of $\Delta$.  Let $\bar{V_I}$ be the image of $V_I$ in $\Delta/\Sigma^\perp$. Then, in view of \eqref{iso:1}, we just need to show that $\bar{V}_I$ is trivial.

First notice that $\bar{V}_I$ is preserved by the action of $\T_N$. For this it is enough to check that if $z\in V_I\cap \Sigma^\perp$ then $T_q(z)\in\Sigma^\perp$ and $T_p(z)\in\Sigma^\perp$ (clearly they will also be in $V_I$). Let $x\in\Sigma$. We then have the following: $\omega(x,T_q(z))=\omega(T^\vee_q(x),z)$. Now the subalgebras of generated by $T_q$ and $T^\vee_q$ are isomorphic (see p444 in \cite{Ribet90b}). Since the subalgebra generated by $T_q$ preserves $\Sigma$ as shown in \eqref{eq:4}, we get that $T^\vee_q(x)\in\Sigma$ and therefore that $\omega(T^\vee_q(x),z)=0$. Finally, using \eqref{eq:3} again, $\omega(x,T_p(z))=\omega(x,-(p+1)z)=-(p+1)\omega(x,z)=0$. 

Now according to Ribet in the proof of Lemma 2 in \cite{Ribet90a}, $\Delta/\Sigma^\perp$ is dual to $\Sigma$ which is annihilated by almost all operators $T_r - (r+1)$, so $\Delta/\Sigma^\perp$, and therefore $\bar{V}_I$, is annihilated by them too. This implies that any maximal ideal containing $\Ann(\bar{V}_I)$ is Eisenstein. Recall that $V_I$ is not Eisenstein. Now assume for contradiction that $\bar{V_I}$ is non-zero. Since $\Ann(\bar{V_I})$ contains $\Ann(V_I)$, we get that the support of $V_I$ also contains Eisenstein ideals. This is the desired contradiction that completes the proof of Theorem \ref{thm:main}.

\textbf{Acknowledgments:} I would like to thank Gabor Wiese for many useful comments and discussions, and for helping me with the proof of Lemma \ref{lem:aux}. During the time this research was done I was being supported by the SPP 1489 of the Deutsche Forschungsgemeinschaft (DFG).

\bibliography{References}
\bibliographystyle{alpha}

\end{document}